\documentclass[10pt]{amsart}
\usepackage[cp1251]{inputenc}
\usepackage[english]{babel}

\usepackage{amsmath,amsthm,amsfonts,amssymb}
\usepackage{cite}


\newtheorem{lemma}{Lemma}
\newtheorem{theorem}{Theorem}
\newtheorem{definition}{Definition}

\newtheorem{proposition}{Proposition}
\newtheorem{remark}{Remark}
\newcommand{\alert}[1]{{\bf #1}}
\newcommand{\Lang}{\mathcal{L}}
\newcommand{\A}{\mathcal{A}}

\newcommand{\Auniverse}{A}

\newcommand{\ox}{\overline{x}}


\title{Equationally Noetherian property of Ershov algebras}
\author{Y.~Dvorzhetskiy}

\begin{document}

\begin{abstract}
This article is about equationally Noetherian and weak equationally Noetherian property of Ershov algebras. Here we show two canonical forms of the system of equations over Ershov algebras and two criteria of equationally Noetherian and weak equationally Noetherian properties.

\textit{Keywords:} Universal algebraic geometry, Ershov algebras.
\end{abstract}

\maketitle


\section*{Introduction}

E.~Daniyarova, A.~Myasnikov and V.~Remeslennikov proved two Unification Theorems in the articles \cite{DMR1,DMR2,DMR3,DMR4}. These theorems give us 7 equivalent approaches to describe all coordinate algebras of algebraic structures. These theorems can be applied to the concrete classes of structures only. Such classes are $\mathbf{N}$ and $\mathbf{N'}$~--- classes of equationally Noetherian and weak equationally Noetherian structures, $\mathbf{Q}$ and $\mathbf{U}$~--- classes of $q_\omega$- and $u_\omega$-compact structures. It is need to prove that structure is in one of these classes to apply Unification Theorems.

A.~Shevlyakov proved criteria of equationally Noetherian, weak equationally Noetherian property, $q_\omega$- and $u_\omega$-compactness of Boolean algebras with constants \cite{Shevlyakov}. The criteria of equationally Noetherian property are also true over more general distributive lattices \cite{Dv_dist_lattices}.

Here we study Ershov algebras~--- they are something mean between distributive lattices and Boolean algebras. We will show two canonical forms for systems of equations and prove two criteria of equationally an weak equationally Noetherian property.


\section{Lattices, Ershov algebras}

We show in this section all required definitions from lattice theory. Also we define Ershov algebras. All required definitions from algebraic geometry can be found in \cite{DMR1,DMR2,DMR3}.

Let $\Lang_0 = \{ \vee^{(2)}, \wedge^{(2)} \}$~--- the first-order language with two binary functional symbol: $\vee$ and $\wedge$.

\begin{definition}[Lattice]
Algebraic system $\A = \left< \Auniverse; \vee, \wedge \right>$ of first-order language $\Lang_0$ is called \alert{lattice}, if for all $a, b, c \in \Auniverse$:
\begin{enumerate}
\item $a \wedge a = a$, $a \vee a = a$.
\item $a \wedge b = b \wedge a$, $a \vee b = b \vee a$.
\item $(a \wedge b) \wedge c = a \wedge (b \wedge c)$, $(a \vee b) \vee c = a \vee (b \vee c)$.
\item $a \wedge (a \vee b) = a$, $a \vee (a \wedge b) = a$.
\end{enumerate}  
\end{definition}

Next we give definition of distributive lattices

\begin{definition}[Distributive lattice]
Lattice $\A = \left< \Auniverse; \vee, \wedge \right>$ is called  \alert{distributive}, if for all $a, b, c \in \Auniverse$:
$$
a \wedge (b \vee c) = (a \wedge b) \vee (a \wedge c),
$$
$$
a \vee (b \wedge c) = (a \vee b) \wedge (a \vee c).
$$
\end{definition}

A greatest element in lattice, if it exists, denotes as 1, and a least element denotes as 0, respectively. Lattices with 0 and 1 are called \alert{bounded lattices}.

\begin{definition}[Complement]
Two elements $x$ and $x'$ of bounded lattice are \alert{comp\-le\-ments} if $x \wedge x' = 0$ and $x \vee x' = 1$.
\end{definition}

A complement $x'$ of element $x$ denotes as $\overline{x}$.

\begin{definition}[Complement in interval]
We called that element $x'$ is \alert{complement of $x$ in interval $[ a, b ]$} if $x' \wedge x = a$ and $x' \vee x = b$.
\end{definition}

We note that for all distributive littices there is at most one complement in interval \cite{Birkhoff}.

It is possible to define another complement in a lasttice with least element 0.

\begin{definition}[Relative complement]
Relative complement of $a$ in $b$ is a complement of $a$ in interval $[ 0, a \vee b ]$.
\end{definition}

Such complemlent of $a$ in $b$ we denote as $b \setminus a$ and call \alert{difference} between $b$ and $a$.

\begin{definition}[Ershov algebras {\cite{Ershov}}]
Distributive lattice 
$$\A = \left< \Auniverse, \vee^{(2)}, \wedge^{(2)}, \setminus^{(2)}, 0 \right>$$
with least element 0 and operation of relative complement we call \alert{Ersov algebra}.
\end{definition}

If there is a graeatest element 1 in any Ershov algebra then this Ershov algebra is simply Boolean algebra:
$$
\ox = 1 \setminus x.
$$

Let's show some propositions for Ershov algebras
\begin{proposition}
Let $x$ and $y$ are arbitrary elements of Ershov algebra. The following equivalences are true:
\begin{enumerate}
\item $x \setminus y \leq x$,
\item $(x \setminus y) \vee x = x$,
\item $(x \setminus y) \vee (x \wedge y) = x$, 
\item $(x \setminus y) \vee y = x \vee y$,
\item $(x \setminus y) \wedge x = x \setminus y$,
\item $(x \setminus y) \wedge y = 0$.
\end{enumerate}
\end{proposition}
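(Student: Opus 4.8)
The plan is to first unpack the definition of the operation $\setminus$ into two primitive equalities and then obtain every item by distributivity and the absorption laws. Writing $z = x \setminus y$ for brevity, the definition of relative complement says that $z$ is the complement of $y$ in the interval $[0, x \vee y]$, which by the definition of complement in interval means precisely that $z \wedge y = 0$ and $z \vee y = x \vee y$. These two equalities are exactly items (6) and (4), so those cases need no further argument; moreover, by the remark following the definition of complement in interval, the element $z$ satisfying them is unique in a distributive lattice, so I may use both equalities freely in what follows.

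Next I would establish $z \leq x$, which simultaneously yields items (1), (2) and (5), since in any lattice the statements $z \leq x$, $z \vee x = x$ and $z \wedge x = z$ are equivalent. For this I would compute, using absorption and then distributivity together with $z \vee y = x \vee y$ and $z \wedge y = 0$,
\[
z = z \wedge (z \vee y) = z \wedge (x \vee y) = (z \wedge x) \vee (z \wedge y) = (z \wedge x) \vee 0 = z \wedge x,
\]
so that $z \wedge x = z$, i.e. $z \leq x$.

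Finally, for item (3) I would run the analogous computation on $x$, again using absorption, distributivity, the equality $z \vee y = x \vee y$, and the just-proved identity $x \wedge z = z$,
\[
x = x \wedge (x \vee y) = x \wedge (z \vee y) = (x \wedge z) \vee (x \wedge y) = z \vee (x \wedge y),
\]
which is exactly $(x \setminus y) \vee (x \wedge y) = x$.

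I do not expect a genuine obstacle here: once the definition is reduced to the pair $z \wedge y = 0$ and $z \vee y = x \vee y$, every item is a two- or three-line consequence of distributivity and absorption. The only point demanding care is the correct reading of ``complement of $y$ in the interval $[0, x \vee y]$'' — one must check that it is $y$, not $x$, that is being complemented, and that the top of the interval is $x \vee y$ — so that items (4) and (6) come out with the right operands. After that the order of the remaining derivations is essentially forced, since the proof of (3) relies on $z \leq x$ obtained in the previous step.
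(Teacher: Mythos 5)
Your proof is correct and takes the same route as the paper, which simply asserts that all six identities ``follow clearly from the given definition of a relative complement''; you correctly unpack that definition into the two equalities $(x\setminus y)\wedge y = 0$ and $(x\setminus y)\vee y = x\vee y$ (items (6) and (4)) and derive the rest by absorption and distributivity. Your write-up in fact supplies the computations the paper leaves implicit, including the one subtle point—reading $x\setminus y$ as the complement of $y$ (not of $x$) in the interval $[0, x\vee y]$—so nothing is missing.
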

\begin{proof}
These propositions follow clearly from the given definiton of a relative complement.
\end{proof}

Next, prove another equivalences.
\begin{proposition}\label{main_prop}
The following equivalences are true in any Ershov algebra:
\begin{enumerate}
\item $(x \vee y) \setminus a = (x \setminus a) \vee (y \setminus a)$,
\item $(x \wedge y) \setminus a = (x \setminus a) \wedge (y \setminus a)$,
\item $(x \setminus y) \setminus a = (x \setminus y) \vee (x \setminus a)$.
\end{enumerate}
and the following ones are true as well:
\begin{enumerate}
\item $x \setminus (a \vee b) = (x \setminus a) \wedge (x \setminus b)$,
\item $x \setminus (a \wedge b) = (x \setminus a) \vee (x \setminus b)$,
\item $x \setminus (a \setminus b) = (x \setminus a) \vee (x \wedge a \wedge b)$.
\end{enumerate}
\end{proposition}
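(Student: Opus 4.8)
The plan is to reduce every identity to the defining property of the relative complement together with its uniqueness. By the remark following the definition of complement in an interval (Birkhoff), in a distributive lattice a complement in a given interval is unique whenever it exists; hence $u \setminus v$ is the unique element $z$ satisfying $z \leq u \vee v$, $z \wedge v = 0$ and $z \vee v = u \vee v$. So for each identity of the shape $u \setminus v = R$ it suffices to verify that the right-hand side $R$ enjoys these three properties, after which uniqueness forces $R = u \setminus v$.

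With this in hand each proof splits into three checks. The bound $R \leq u \vee v$ is immediate from item (1) of the first proposition together with monotonicity of $\vee$ and $\wedge$: every subterm $s \setminus t$ of $R$ is bounded by $s$, and collecting these gives $R \leq u \vee v$ (indeed $R \leq u$ when $R$ is built from meets, as in the second identity). The meet equation $R \wedge v = 0$ follows by distributing $\wedge v$ through the joins and meets of $R$ and applying item (6), which annihilates each summand; for instance in identity (1) of the first group, $((x \setminus a) \vee (y \setminus a)) \wedge a = ((x \setminus a) \wedge a) \vee ((y \setminus a) \wedge a) = 0$. The join equation $R \vee v = u \vee v$ is the substantive one: here one distributes $\vee v$ and invokes item (4), e.g. for identity (2) of the first group $((x \setminus a) \wedge (y \setminus a)) \vee a = ((x \setminus a) \vee a) \wedge ((y \setminus a) \vee a) = (x \vee a) \wedge (y \vee a) = (x \wedge y) \vee a$, the last step being distributivity once more.

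The join equations are where the real work lies, and two features make them the main obstacle. First, the second group is not independent of the first: proving $x \setminus (a \wedge b) = (x \setminus a) \vee (x \setminus b)$ requires the inequality $x \leq (x \setminus a) \vee (x \setminus b) \vee (a \wedge b)$, which I would obtain by writing $x = (x \setminus a) \vee (x \wedge a)$ via item (3) and then bounding $x \wedge a = ((x \wedge a) \setminus b) \vee (x \wedge a \wedge b)$ using the already-established identity (2) of the first group. Thus the intended order is first group, then second group. The composite identity involving $(x \setminus y) \setminus a$ I would treat by setting $w = x \setminus y$ and applying the first proposition to $w$, reducing it to the other cases. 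Second, one should pin down the exact form of the right-hand side in the third identity of the first group before running the uniqueness argument: since $(x \setminus y) \setminus a \leq x \setminus y$ by item (1), the right-hand side is forced to lie below $x \setminus y$, which the verification must respect. Apart from that, every step is a mechanical application of distributivity together with items (1), (4) and (6) of the first proposition.
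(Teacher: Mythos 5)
Your overall strategy is exactly the paper's: for each identity $u \setminus v = R$ one checks $R \wedge v = 0$ and $R \vee v = u \vee v$, and then invokes the uniqueness of complements in an interval of a distributive lattice (your third condition $R \leq u \vee v$ is redundant, being a consequence of the join equation). The sample verifications you give for the first group and your route to the inequality $x \leq (x \setminus a) \vee (x \setminus b) \vee (a \wedge b)$ in the second group are correct, though the claimed dependence of the second group on the first is not actually needed: pure distributivity already gives $(x \setminus a) \vee (x \setminus b) \vee (a \wedge b) = \left( (x \setminus a) \vee (x \setminus b) \vee a \right) \wedge \left( (x \setminus a) \vee (x \setminus b) \vee b \right) = (x \vee a) \wedge (x \vee b) = x \vee (a \wedge b)$, which is how the paper's ``proved similarly'' goes through.

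The genuine gap is your treatment of identity (3) of the first group, which you never actually prove. Your own observation already contains the key point but you fail to draw the conclusion: since $(x \setminus y) \setminus a \leq x \setminus y$ while the stated right-hand side $(x \setminus y) \vee (x \setminus a)$ lies \emph{above} $x \setminus y$, the identity as printed is false --- e.g.\ in the Boolean algebra of subsets of $\{1,2\}$ with $x = \{1,2\}$, $y = \{1\}$, $a = \{2\}$ the left side is $\emptyset$ and the right side is $\{1,2\}$. The $\vee$ is a misprint for $\wedge$: the correct identity, and the one the paper's proof actually establishes, is $(x \setminus y) \setminus a = (x \setminus y) \wedge (x \setminus a)$, equivalently $x \setminus (y \vee a)$. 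Your proposed remedy, ``set $w = x \setminus y$ and apply the first proposition to $w$, reducing it to the other cases,'' is not a proof: once the internal structure of $w$ is forgotten, $w \setminus a$ is just a generic relative complement and none of the remaining identities applies to it. What is needed is the same uniqueness check as in the other cases, applied to the corrected right-hand side: $\left( (x \setminus y) \wedge (x \setminus a) \right) \vee a = \left( (x \setminus y) \vee a \right) \wedge \left( (x \setminus a) \vee a \right) = \left( (x \setminus y) \wedge x \right) \vee a = (x \setminus y) \vee a$, and $\left( (x \setminus y) \wedge (x \setminus a) \right) \wedge a = (x \setminus y) \wedge 0 = 0$; this is precisely the computation in the paper.
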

\begin{proof}
Let's prove that equations by the definition of a relative complement.
\begin{enumerate}
\item
\begin{equation*}
\begin{array}{c}
\left((x \setminus a) \vee (y \setminus a) \right) \vee a = \left((x \setminus a) \vee a\right) \vee \left((y \setminus a) \vee a \right) = \\

=  (x \vee a) \vee (y \vee a) = (x \vee y) \vee a,
\end{array}
\end{equation*}
and
$$\left((x \setminus a) \vee (y \setminus a) \right) \wedge a = \left((x \setminus a) \wedge a\right) \vee \left((y \setminus a) \wedge a \right) = 0 \vee 0 = 0.$$
It implies,
$$
\left((x \setminus a) \vee (y \setminus a) \right) \vee a = (x \vee y) \vee a,
$$
$$\left((x \setminus a) \vee (y \setminus a) \right) \wedge a = 0,
$$
and $(x \setminus a) \vee (y \setminus a)$ is the complement of $a$ in interval $[0, (x \vee y) \vee a]$, i.e. $(x \vee y) \setminus a = (x \setminus a) \vee (y \setminus a)$.

\item This equivalence is proved similarly.

\item 
\begin{equation*}
\begin{array}{c}
((x \setminus y) \wedge (x \setminus a)) \vee a = ((x \setminus y) \vee a) \wedge ((x \setminus a) \vee a) = \\

((x \setminus y) \vee a) \wedge (x \vee a) = (x \setminus y \wedge x) \vee a = (x \setminus y) \wedge a. \\

((x \setminus y) \wedge (x \setminus a)) \wedge a = (x \setminus y) \wedge 0 = 0.
\end{array}
\end{equation*}

Then, we proved that $(x \setminus y) \setminus a = x \setminus (y \vee a)$.
\end{enumerate}

Let's prove other equivalences.
\begin{enumerate}
\item Clearly:
\begin{equation*}
\begin{array}{c}
((x \setminus a) \wedge (x \setminus b)) \vee (a \vee b) = ((x \setminus a) \vee a \vee b) \wedge ((x \setminus b) \vee a \vee b) = \\

= (x \vee a \vee b) \wedge (x \vee a \vee b) = x \vee (a \vee b).
\end{array}
\end{equation*}
\begin{equation*}
\begin{array}{c}
((x \setminus a) \wedge (x \setminus b)) \wedge (a \vee b) = ((x \setminus a) \wedge a \wedge (x \setminus b)) \vee ((x \setminus a) \wedge (x \setminus b) \wedge b) = \\

= (0 \wedge (x \setminus b)) \vee ((x \setminus a) \wedge 0) = 0 \vee 0 = 0.
\end{array}
\end{equation*}
\item This equivalence is proved similarly.
\item By definiton.
\begin{equation*}
\begin{array}{c}
(x \setminus a) \vee (x \wedge a \wedge b) \vee  (a \setminus b) =
(x \setminus a) \vee (x \vee (a \setminus b)) \wedge (a \wedge b \vee (a \setminus b)) = \\

= (x \setminus a) \vee (x \vee (a \setminus b)) \wedge a =
((x \setminus a) \vee x \vee (a \setminus b)) \wedge (x \vee (a \setminus b) \vee a) = \\

= (x \vee (a \setminus b)) \wedge (x \vee a \vee b) = 
x \vee ((a \setminus b) \wedge (a \vee b)) = \\

= x \vee ((a \setminus b) \wedge a) \vee ((a \setminus b) \wedge b) =
x \vee ((a \setminus b) \vee 0) = x \vee (a \setminus b).
\end{array}
\end{equation*}
And:
$$((x \setminus a) \vee (x \wedge a \wedge b)) \wedge (a \setminus b) = 
((x \setminus a) \wedge (a \setminus b)) \vee (x \wedge a \wedge b \wedge (a\setminus b)) = 0 \vee 0 = 0.$$
\end{enumerate}
\end{proof}


\section{Canonical form of systems of equations}

We construct here the canonical form of system of equations in Ershov algebras.

\begin{lemma}\label{lemma_term_nf}
Any term $t (\overline{x})$ of variables $\overline{x} = (x_1, \ldots, x_n)$ in Ersov algebra $\A$ can be rewritten as:
$$
\left( A_{1,1} \wedge A_{1,2} \wedge \ldots \wedge A_{1,k_1} \right) \vee \ldots \vee \left( A_{m,1} \wedge \ldots \wedge A_{m, k_m} \right),
$$
where $A_{i,j}$ is either a variable symbol or a constant symbol of the language $\Lang$, or expresion $a \setminus b$, where $a$ and $b$ are also either a variable or a constant symbol. And whole expression is written in DNF.
\end{lemma}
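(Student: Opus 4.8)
The plan is to proceed by structural induction on the term $t(\ox)$, building up the required disjunctive normal form as we ascend the term tree. The base case is immediate: a single variable symbol or constant symbol already has the desired shape, being a disjunction of one conjunction consisting of one factor $A_{1,1}$. For the inductive step we assume that the immediate subterms $t_1$ and $t_2$ have already been rewritten in the claimed DNF, and we treat the three ways $t$ can be formed from them. The cases $t = t_1 \vee t_2$ and $t = t_1 \wedge t_2$ are handled purely inside the distributive lattice reduct: for $\vee$ we simply concatenate the two disjunctions, while for $\wedge$ we invoke the distributive law repeatedly to expand $\left(\bigvee_i C_i\right) \wedge \left(\bigvee_j D_j\right)$ into $\bigvee_{i,j} (C_i \wedge D_j)$. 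Since each $C_i \wedge D_j$ is again a conjunction of factors of the allowed kind, this is already in DNF.

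The real work is the case $t = t_1 \setminus t_2$, and here Proposition \ref{main_prop} is the engine. Writing $t_1 = \bigvee_i C_i$ and $t_2 = \bigvee_j D_j$, I would first strip off the outer disjunctions and conjunctions of the two arguments by applying the identities of Proposition \ref{main_prop} in sequence. Distributing $\setminus$ over the disjunction on the left (identity 1 of the first group) and over the disjunction on the right (identity 1 of the second group) reduces the computation to the individual differences $C_i \setminus D_j$. Distributing over the conjunctions making up $C_i$ (identity 2 of the first group) and those making up $D_j$ (identity 2 of the second group) then reduces everything to differences $\ell \setminus \ell'$ of single factors. It remains to check that the difference of two allowed factors can itself be written as a DNF in allowed factors; this is the bottom of the recursion.

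That bottom case is a short finite analysis using the remaining identities. When both $\ell$ and $\ell'$ are variables or constants, $\ell \setminus \ell'$ is already an allowed factor. When a factor of the form $a \setminus b$ is involved, identity 3 of the first group collapses a left-nested difference into a difference with a compound subtrahend (which identity 1 of the second group then splits), while identity 3 of the second group rewrites $x \setminus (a \setminus b)$ as $(x \setminus a) \vee (x \wedge a \wedge b)$. In every combination the result is a disjunction of conjunctions of factors that are variables, constants, or of the form $a \setminus b$ with $a, b$ atomic. Feeding these DNFs back through the distributive law, exactly as in the $\wedge$ case, reassembles $C_i \setminus D_j$, and hence $t_1 \setminus t_2$, into a single DNF.

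The main obstacle to watch is termination of the rewriting in the $\setminus$-case: one must be sure that pushing differences inward does not regenerate differences applied to compound arguments. This is guaranteed precisely because the bottom case $\ell \setminus \ell'$ never produces a $\setminus$ whose operand is anything other than a variable or a constant, and the two identities numbered 3 always strictly reduce the syntactic complexity of the arguments of the innermost $\setminus$. Making this decrease explicit, for instance by a secondary induction on the total number of operation symbols occurring beneath a $\setminus$, is what turns the informal instruction to ``push differences inward'' into a rigorous finite procedure and completes the proof.
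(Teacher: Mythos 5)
Your proof is correct and takes essentially the same route as the paper: both arguments use the identities of Proposition \ref{main_prop} as the engine to push $\setminus$ inward until its operands are single variables or constants, and then use distributivity to restore DNF. Your structural-induction packaging simply makes rigorous the paper's informal termination argument (that each rewriting step decreases the number of $\setminus$ symbols inside the operands of a difference).
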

\begin{proof}
Let $t(\overline{x})$ is an arbitrary term of variables $\overline{x}$. If there is no symbol $\setminus$ in this term, then rewrite this term in DNF, where $A_{i,j}$ either a variable symbol or a constant symbol.

If there are symbols $\setminus$, but the left and right operands haven't this one, then we also have required from.

Suppose that the term with non-trivial subterm exists. We can write it as:
$$(s_1 \vee s_2) \setminus t$$
or
$$(s_1 \wedge s_2) \setminus t$$
or
$$(s_1 \setminus s_2) \setminus t,$$
where $s_1$, $s_2$, $t$~--- terms of variables $\overline{x}$. If we apply Proposition \ref{main_prop} we get:
$$s_1 \setminus t \vee s_2 \setminus t$$
or
$$s_1 \setminus t \wedge s_2 \setminus t$$
or
$$s_1 \setminus s_2 \vee s_1 \setminus t.$$
Denote, that after appling we have less symbols $\setminus$ in the left operand of the equivalented subterm. Appling these equivalences many times we can rewrite term in form where is no $\setminus$ symbols in the left operand.

Analogically, if start term has one of the following forms:
$$t \setminus (s_1 \vee s_2)$$
or
$$t \setminus (s_1 \wedge s_2)$$
or
$$t \setminus (s_1 \setminus s_2),$$
where $s_1$, $s_2$, $t$~--- terms of variables $\overline{x}$. If we apply Proposition \ref{main_prop} we get:
$$(t \setminus s_1) \wedge (t \setminus s_2)$$
or
$$(t \setminus s_1) \vee (t \setminus s_2)$$
or
$$(t \setminus s_1) \vee (t \wedge s_1 \wedge s_2),$$
respectively. Denote, that after appling we have less symbols $\setminus$ in the right operand of the equivalented subterm, and left operand does not change.

Do this actions many times we can rewrite the term to the form which is required.
\end{proof}

\begin{remark}\label{remark_to_lemma_term_nf}
Denote that we can rewrite term in CNF as well as DNF. It will be applied below.

Also denote that if the subterm $A_{i,j}$ does not contain the symbol $\setminus$, then we can also rewrite this subterm as $A_{i,j} = A_{i,j} \setminus 0$. Therefore, we can assume that all $A_{i,j}$ written as $a \setminus b$, where $a$ and $b$ either a variable or functional symbol.
\end{remark}

It implies next theorem.

\begin{theorem}\label{theorem_nf}
Any system of equations $S(\ox)$ of variables $\ox$ is equivalent to the system $S'(\ox)$, consisted from following equations
\begin{equation*}
\begin{array}{c}
\left( A_{1,1} \wedge \ldots \wedge A_{1,k_1} \right) \vee \ldots \vee \left( A_{m,1} \wedge \ldots \wedge A_{m, k_m} \right) = \\
= \left( B_{1,1} \wedge \ldots \wedge B_{1,k_1} \right) \vee \ldots \vee \left( B_{1,1} \wedge \ldots \wedge A_{s, l_s} \right),
\end{array}
\end{equation*}
where $A_{i,j}$ and $B_{i,j}$ are either a variable or constant symbol of the language $\Lang$, or a subterm $a \setminus b$, where $a$ and $b$ are either a variable or constant symbol, and the whole ewxpression is written in DNF.
\end{theorem}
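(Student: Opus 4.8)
The plan is to reduce the theorem to the already-proved term normal form of Lemma~\ref{lemma_term_nf} applied to each side of each equation separately. An arbitrary system $S(\ox)$ is by definition a finite (or infinite) collection of equations of the form $t(\ox) = t'(\ox)$, where $t$ and $t'$ are terms in the language of Ershov algebras. Since the normal form is obtained by term-equivalences that hold identically in every Ershov algebra (Proposition~\ref{main_prop}), replacing a term by its normal form does not change the solution set of any individual equation, and hence does not change the solution set of the whole system. Thus it suffices to treat one equation at a time.

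First I would fix an equation $t(\ox) = t'(\ox)$ of the system. By Lemma~\ref{lemma_term_nf} the left-hand side $t(\ox)$ can be rewritten as a term in DNF,
\begin{equation*}
\left( A_{1,1} \wedge \ldots \wedge A_{1,k_1} \right) \vee \ldots \vee \left( A_{m,1} \wedge \ldots \wedge A_{m, k_m} \right),
\end{equation*}
where each $A_{i,j}$ is a variable symbol, a constant symbol, or a subterm of the form $a \setminus b$ with $a,b$ variables or constants. Applying the same lemma to the right-hand side $t'(\ox)$ produces an analogous DNF expression with literals $B_{i,j}$ of exactly the same shape. Since both rewritings are identities of Ershov algebras, the resulting equation has precisely the form displayed in the statement and is equivalent to the original $t = t'$.

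Finally I would assemble the rewritten equations into the system $S'(\ox)$. Because each step preserves the solution set equation-by-equation, an $n$-tuple $\overline{a}$ satisfies $S(\ox)$ if and only if it satisfies every rewritten equation, i.e.\ if and only if it satisfies $S'(\ox)$; hence $S$ and $S'$ are equivalent as systems over every Ershov algebra. This completes the argument.

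I do not expect any genuine obstacle here, since the entire content is carried by Lemma~\ref{lemma_term_nf} and by the fact that solution-set equivalence of two terms lifts to equivalence of the equations and of the whole system. The only point requiring a little care is the appeal to Remark~\ref{remark_to_lemma_term_nf}: to make the uniform statement ``each $A_{i,j}$ is a subterm $a \setminus b$'' literally true one should, where a literal is a bare variable or constant $c$, rewrite it as $c \setminus 0$; this is again an Ershov-algebra identity and so changes nothing. One should also note the typographical slip in the displayed right-hand side of the theorem (the repeated $B_{1,1}$ and the stray $A_{s,l_s}$), which I would correct to the intended $\left( B_{1,1} \wedge \ldots \wedge B_{1,l_1} \right) \vee \ldots \vee \left( B_{s,1} \wedge \ldots \wedge B_{s,l_s} \right)$.
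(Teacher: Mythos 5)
Your proposal is correct and follows exactly the route the paper intends: the paper gives no explicit proof of Theorem~\ref{theorem_nf}, stating only that it is implied by Lemma~\ref{lemma_term_nf} (together with Remark~\ref{remark_to_lemma_term_nf}), and your argument is precisely the natural filling-in of that implication --- apply the lemma to both sides of each equation and observe that identity-based rewriting preserves solution sets. Your correction of the typographical slip in the displayed right-hand side is also apt.
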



\section{Equationally noetherian property}

All required in this setion definitons can be found in \cite{DMR1, DMR2, DMR3, DMR4}.

\begin{theorem}
$\mathcal{C}$-algebra $\A$ is equationally noetherian if and only if subalgebra $\mathcal{C}$ is finite. 
\end{theorem}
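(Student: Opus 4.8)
The plan is to prove both implications, with finiteness of the constant subalgebra $\mathcal{C}$ as the pivot. For the direction ``$\mathcal{C}$ finite $\Rightarrow$ $\A$ equationally noetherian'' I would argue that, when $\mathcal{C}$ is finite, the language $\Lang$ has only finitely many constant symbols, so that for each fixed tuple $\ox = (x_1,\dots,x_n)$ there are only finitely many terms up to equivalence in $\A$. Indeed, by Lemma \ref{lemma_term_nf} and Remark \ref{remark_to_lemma_term_nf} every term is equivalent to a join of meets of atoms of the shape $a \setminus b$ with $a,b \in \{x_1,\dots,x_n\}\cup\mathcal{C}$; since there are only finitely many such atoms, idempotency, commutativity, associativity and absorption leave only finitely many pairwise inequivalent terms in the variables $\ox$. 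Consequently there are finitely many equations $t(\ox) = s(\ox)$ up to equivalence, so any system $S(\ox)$ is equivalent to the finite subsystem obtained by selecting one equation from each equivalence class represented in $S$. As this holds for every $n$, the algebra $\A$ is equationally noetherian.

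For the converse I would argue by contraposition and show that if $\mathcal{C}$ is infinite then $\A$ is not equationally noetherian. First I would reduce the problem to exhibiting a strictly decreasing infinite chain $Y_1 \supsetneq Y_2 \supsetneq \cdots$ of algebraic sets in $\A^1$: given such a chain with $Y_i = V(S_i)$ for finite nested systems $S_1 \subseteq S_2 \subseteq \cdots$, the union $S = \bigcup_i S_i$ is a one-variable system no finite subsystem of which has the same solution set, contradicting the noetherian property. Since $\mathcal{C}$ is an infinite poset it contains either an infinite chain or an infinite antichain. If $\mathcal{C}$ contains an infinite increasing chain $c_1 < c_2 < \cdots$, then the equations $x \vee c_n = x$ (that is, $c_n \le x$) define $Y_n = \{a \in \Auniverse : a \ge c_n\}$, and $c_n \in Y_n \setminus Y_{n+1}$ shows the chain is strictly decreasing; an infinite decreasing chain is handled dually with the equations $x \wedge c_n = x$.

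It remains to treat the case of an infinite antichain $\{c_n\}$, which I expect to be the main obstacle, since neither the partial joins $\bigvee_{i\le n} c_i$ nor the partial meets $\bigwedge_{i\le n} c_i$ need be strictly monotone (both may stabilize, as for an antichain of complemented elements and for a pairwise disjoint antichain, respectively). My plan is to pass to a subsequence along which one of these two sequences is strictly monotone: in the strictly increasing join case the equations $c_n \le x$ give a strictly decreasing chain of algebraic sets as before, and in the strictly decreasing meet case the dual equations $x \le c_n$ do. When the partial joins and partial meets of the full sequence both stabilize, I would discard an initial segment and move into an interval $[m, M]$, translating the antichain into $[0, M \setminus m]$ via $c_n \mapsto c_n \setminus m$ and computing these differences with Proposition \ref{main_prop}, then repeat. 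The delicate point is to show that for an \emph{arbitrary} infinite antichain at least one such strictly monotone subsequence always exists, so that this reduction terminates and the resulting chain of algebraic sets is genuinely strict; I expect this to follow from a Ramsey-type dichotomy applied to the pairs $(c_i, c_j)$, using that $c_i \setminus c_j \ne 0$ for $i \ne j$ precisely because the elements are pairwise incomparable.
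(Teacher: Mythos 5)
Your forward direction (finite $\mathcal{C}$ implies equationally Noetherian) is correct and is essentially the paper's own argument: by Lemma \ref{lemma_term_nf} and Remark \ref{remark_to_lemma_term_nf} there are only finitely many atoms $a \setminus b$ built from the variables and the finitely many constants, hence finitely many pairwise inequivalent terms and equations in a fixed tuple $\ox$, so every system is equivalent to a finite subsystem. Your reduction of the converse to an infinite strictly decreasing chain of one-variable algebraic sets is also sound, and the chain case is handled correctly (for $c_1 < c_2 < \cdots$ the sets $Y_n = \{a : c_n \leq a\}$ satisfy $c_n \in Y_n \setminus Y_{n+1}$, and dually for a decreasing chain). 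Note that for the converse the paper itself does much less than you attempt: it simply invokes the distributive-lattice criterion of \cite{Dv_dist_lattices}, which applies because an Ershov algebra is in particular a distributive lattice and a witnessing system over the lattice language with constants is still a witnessing system over the richer Ershov language.

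The genuine gap is exactly where you suspect it: the infinite antichain case is never proved. Saying you ``expect'' a Ramsey-type dichotomy is not an argument, and your translate-and-repeat scheme has no termination proof --- after passing from $\{c_n\}$ to $\{c_n \setminus m\}$ inside $[0, M \setminus m]$, the new antichain may again have stabilizing partial joins and partial meets, and nothing in your construction decreases, so ``then repeat'' is unjustified. The statement you need is true, but it requires a real lemma; the cleanest repair makes the antichain case unnecessary altogether. Prove: \emph{every infinite distributive lattice contains an infinite chain}. Indeed, if $\mathcal{C}$ had no infinite chain it would satisfy the ascending and descending chain conditions; by DCC every element of $\mathcal{C}$ is a finite join of join-irreducible elements, so the set $J$ of join-irreducibles must be infinite (otherwise $\mathcal{C}$ would be finite); in a distributive lattice every join-irreducible $p$ is join-prime, since $p \leq a \vee b$ gives $p = (p \wedge a) \vee (p \wedge b)$, whence $p \leq a$ or $p \leq b$; as $J$ has no infinite chain, Ramsey's theorem yields an infinite antichain $p_1, p_2, \ldots$ of join-primes, and join-primeness forbids $p_{k+1} \leq p_1 \vee \cdots \vee p_k$, so the partial joins form an infinite strictly increasing chain --- a contradiction. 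With this lemma, an infinite $\mathcal{C}$ always contains an infinite chain, and your chain construction finishes the converse; alternatively, one can simply cite \cite{Dv_dist_lattices} as the paper does.
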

\begin{proof}
Suppose that the subalgebra $\mathcal{C}$ is finite. Theorem \ref{theorem_nf} implies that the number of non-equivalent equations and non-equivalent systems of equations are finite. Therefore we can clear the infinite system of equitions from all equivalent systems and make some finite system of equations. Then Ershov algebra $\A$ is equationally Noetherian.

Prove to the other side. Let $\mathcal{C}$ is infinite. Because an Ershov algebra is a distributive lattice, then we can apply the proof from  \cite{Dv_dist_lattices}. Finally, if $\mathcal{C}$ is finite, then $\A$ does not have equationally Noetherian property.
\end{proof}

Also prove following proposition by ananlogy with Proposition \ref{main_prop}.

\begin{proposition}\label{prop_n_addons}
In arbitratry Ershov algebra the following equivalences are true:
\begin{enumerate}
\item $a_1 \setminus b_1 \wedge \ldots \wedge a_n \setminus b_n =
(a_1 \wedge \ldots \wedge a_n) \setminus (b_1 \vee \ldots \vee b_n),$
\item $a_1 \setminus b_1 \vee \ldots \vee a_n \setminus b_n =
(a_1 \vee \ldots \vee a_n) \setminus (b_1 \wedge \ldots \wedge b_n).$
\end{enumerate}
\end{proposition}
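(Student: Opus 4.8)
The plan is to treat both identities as statements about relative complements and to verify them through the defining characterization of $u \setminus v$: in a distributive lattice the relative complement of $v$ in $u$ is the \emph{unique} element $w$ of the interval $[0, u \vee v]$ satisfying $w \wedge v = 0$ and $w \vee v = u \vee v$ (uniqueness being the fact recorded after the definition of complement in an interval). So, to establish each equivalence, I would show that its left-hand side satisfies the two defining equations of the relative complement named on the right-hand side, and then invoke uniqueness. Throughout I would use only distributivity and Proposition 1, exactly in the spirit of the proof of Proposition \ref{main_prop}.

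For the first identity, write $X = (a_1 \setminus b_1) \wedge \cdots \wedge (a_n \setminus b_n)$. Since $X \leq a_i \setminus b_i$ and $(a_i \setminus b_i) \wedge b_i = 0$ by Proposition 1, monotonicity of meet gives $X \wedge b_i = 0$ for every $i$, whence by distributivity $X \wedge (b_1 \vee \cdots \vee b_n) = \bigvee_i (X \wedge b_i) = 0$; this is the meet condition. For the join condition, distributivity yields $X \vee (b_1 \vee \cdots \vee b_n) = \bigwedge_i ((a_i \setminus b_i) \vee b_1 \vee \cdots \vee b_n)$, and inside each factor the summand $b_i$ absorbs $a_i \setminus b_i$ via $(a_i \setminus b_i) \vee b_i = a_i \vee b_i$ (Proposition 1), so each factor reduces to $a_i \vee (b_1 \vee \cdots \vee b_n)$; distributing back out gives $(a_1 \wedge \cdots \wedge a_n) \vee (b_1 \vee \cdots \vee b_n)$. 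Uniqueness then delivers identity~(1).

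For the second identity the meet condition is equally immediate: with $Y = (a_1 \setminus b_1) \vee \cdots \vee (a_n \setminus b_n)$ one has $(a_i \setminus b_i) \wedge (b_1 \wedge \cdots \wedge b_n) \leq (a_i \setminus b_i) \wedge b_i = 0$, so $Y \wedge (b_1 \wedge \cdots \wedge b_n) = 0$. The step I expect to be the main obstacle is the join condition $Y \vee (b_1 \wedge \cdots \wedge b_n) = (a_1 \vee \cdots \vee a_n) \vee (b_1 \wedge \cdots \wedge b_n)$. Here the absorption trick used for~(1) is unavailable, because one now joins only with the meet $b_1 \wedge \cdots \wedge b_n$, which may lie strictly below an individual $b_i$, so $(a_i \setminus b_i) \vee (b_1 \wedge \cdots \wedge b_n)$ need not recover $a_i$. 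In fact this condition can fail: in the power-set Boolean algebra on $\{1,2\}$ (an Ershov algebra with $u \setminus v = u \wedge \overline{v}$), taking $a_1 = b_1 = \{1\}$ and $a_2 = b_2 = \{2\}$ makes the left-hand side of~(2) equal to $0$ while the right-hand side equals $\{1,2\}$. I therefore expect~(2) to require an additional hypothesis (for example that the $b_i$ all coincide, so that the meet equals each $b_i$ and the absorption argument of~(1) is restored), and the real work will be to isolate and justify that hypothesis; the first identity, by contrast, should go through unconditionally.
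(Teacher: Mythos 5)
Your treatment of identity (1) is correct and is essentially the paper's own argument: show that the left-hand side meets $b_1 \vee \cdots \vee b_n$ at $0$ and joins with it to $(a_1 \wedge \cdots \wedge a_n) \vee (b_1 \vee \cdots \vee b_n)$, then invoke uniqueness of complements in an interval of a distributive lattice. The only cosmetic difference is that the paper computes the case $n = 2$ and appeals to induction, while you run the distributivity computation for general $n$ directly.

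Your suspicion about identity (2) is vindicated, and this is the substantive point: the identity is false as stated, and your counterexample is valid. In the power set of $\{1,2\}$ with $a_1 = b_1 = \{1\}$ and $a_2 = b_2 = \{2\}$, the left side is $(\{1\}\setminus\{1\}) \vee (\{2\}\setminus\{2\}) = \emptyset$, while the right side is $(\{1\}\vee\{2\}) \setminus (\{1\}\wedge\{2\}) = \{1,2\} \setminus \emptyset = \{1,2\}$. The paper disposes of (2) with the single sentence ``Second proposition is proved similarly,'' and the similarity breaks exactly where you located it: the join condition needs the absorption $(a_i \setminus b_i) \vee b_i = a_i \vee b_i$, but on the right one only has the meet $b_1 \wedge \cdots \wedge b_n$, which may lie strictly below $b_i$. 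In general only the inequality $\bigvee_i (a_i \setminus b_i) \leq \left(\bigvee_i a_i\right) \setminus \left(\bigwedge_i b_i\right)$ holds (each $a_i \setminus b_i$ lies below $\bigvee_j a_j$ and meets $\bigwedge_j b_j$ at $0$); in Boolean terms the right side equals $\bigvee_{i,j}\left(a_i \wedge \overline{b_j}\right)$ and so picks up cross terms with $i \neq j$ that the left side lacks. Your proposed repair (all $b_i$ equal, which reduces (2) to the first identity of Proposition \ref{main_prop} iterated) is sound, and dually so is requiring all $a_i$ to coincide, which reduces it to $x \setminus (a \wedge b) = (x\setminus a) \vee (x \setminus b)$. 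Note finally that the flaw is not self-contained: the proof of Theorem \ref{theorem_system_nf} invokes precisely the ``other equivalences'' of Proposition \ref{prop_n_addons}, i.e.\ identity (2), to eliminate $\setminus$ from right-hand sides after passing to CNF, so that step of the paper requires a different argument.
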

\begin{proof}
Here will be proved only equivalences for $n=2$. Any equivalence for $n > 0$ clearly true by the induction.

Prove first proposition by the defintion of relative complement:
\begin{equation*}
\begin{array}{c}
(a_1 \setminus b_1) \wedge (a_2 \setminus b_2) \vee (b_1 \vee b_2) = ((a_1 \setminus b_1) \vee b_1 \vee b_2) \wedge ((a_2 \setminus b_2) \vee b_1 \vee b_2) = \\

(a_1 \vee b_1 \vee b_2) \wedge (a_2 \vee b_1 \vee b_2) = (a_1 \wedge a_2) \vee (b_1 \vee b_2),
\end{array}
\end{equation*}
and
\begin{equation*}
\begin{array}{c}
(a_1 \setminus b_1) \wedge (a_2 \setminus b_2) \wedge (b_1 \vee b_2) = \\

= ((a_1 \setminus b_1) \wedge b_1 \wedge (a_2 \setminus b_2)) \vee ((a_1 \setminus a_2) \wedge (a_2 \setminus b_2) \wedge b_2) = \\

= (0 \wedge (a_2 \setminus b_2)) \vee ((a_1 \setminus b_1) \wedge 0) = 0 \vee 0 = 0.
\end{array}
\end{equation*}
Finally, $(a_1 \setminus b_1) \wedge (a_2 \setminus b_2)$ is a complement of $b_1 \vee b_2$ in interval $[0, (a_1 \wedge a_2) \vee (b_1 \vee b_2)]$, i.e.:
$$
a_1 \setminus b_1 \wedge a_2 \setminus b_2 = (a_1 \wedge a_2) \setminus (b_1 \vee b_2).
$$

Second proposition is proved similarly.
\end{proof}

In the article \cite{Dv_dist_lattices} wil be proved following proposition.

\begin{proposition}\label{prop_dist_nf}
In any distributive $\mathcal{C}$-lattice any system of equation $S(\ox)$ of varibles $\ox$ can be rewritten to the equivalent system of equation in the following form:
$$
x_{i_1} \wedge \ldots \wedge x_{i_m} \wedge c_a \leq  x_{j_1} \vee \ldots \vee x_{j_l} \vee c_b,
$$
where all variable symbols are different in both sides of the equation. And there is most one constant symbol in each side of the equation, and if there are, then $c_a > c_b$.
\end{proposition}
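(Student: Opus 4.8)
The plan is to reduce an arbitrary equation to the stated normal form in three stages: first replace equations by inequalities, then split the inequalities using distributivity, and finally normalise the constant and variable parts.

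First I would rewrite each equation $t_1(\ox) = t_2(\ox)$ as the pair of inequalities $t_1 \leq t_2$ and $t_2 \leq t_1$, using that in any lattice $a \leq b$ is equivalent to $a \vee b = b$. It therefore suffices to bring a single inequality $s \leq t$ into the required shape. Since a distributive $\mathcal{C}$-lattice has no $\setminus$ operation, every term is built from variables and constants using $\vee$ and $\wedge$ only, so by the classical distributive normal forms I would write $s$ in DNF as a join of meets $s = \bigvee_i m_i$ and $t$ in CNF as a meet of joins $t = \bigwedge_j M_j$. Then $s \leq t$ holds if and only if $m_i \leq M_j$ for every pair $(i,j)$, because a join lies below $t$ exactly when each joinand does, and an element lies below a meet exactly when it lies below each meetand. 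This reduces the whole system to a collection of inequalities of the form (meet of literals) $\leq$ (join of literals), each literal being a variable or a constant.

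Next I would compress the two sides. On the left, repeated variables are merged by idempotence, and since $\mathcal{C}$ is a subalgebra the meet of all constant literals is again a single constant $c_a$; dually, on the right repeated variables are merged and the join of all constant literals is a single constant $c_b$. This already yields at most one constant per side with distinct variables inside each side, leaving the inequality as $x_{i_1}\wedge\cdots\wedge x_{i_m}\wedge c_a \leq x_{j_1}\vee\cdots\vee x_{j_l}\vee c_b$. To enforce disjointness of the two variable sets, note that if a variable occurs on both sides, then the whole left side lies below that variable, which lies below the whole right side, so the inequality is a tautology and is discarded; the surviving inequalities thus have disjoint variable sets. For the constants I would replace $c_b$ by $c_a \wedge c_b$: writing $L$ and $R$ for the variable parts, the inequality $L \wedge c_a \leq R \vee c_b$ is equivalent to $L \wedge c_a \leq R \vee (c_a \wedge c_b)$, since the reverse implication is immediate from $c_a \wedge c_b \leq c_b$, and the forward one follows from $L \wedge c_a = (L \wedge c_a)\wedge c_a \leq (R \vee c_b)\wedge c_a = (R\wedge c_a)\vee(c_a\wedge c_b) \leq R \vee (c_a \wedge c_b)$ by distributivity. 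After this replacement $c_b \leq c_a$, and if $c_a = c_a \wedge c_b$, i.e. $c_a \leq c_b$, the inequality is once more a tautology and removable; hence every retained inequality satisfies $c_a > c_b$.

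I expect the constant-normalisation step to be the \emph{main obstacle}: one must verify that replacing $c_b$ by $c_a \wedge c_b$ genuinely preserves the solution set, which is precisely where distributivity is needed, and that the residual case $c_a \leq c_b$ collapses to a tautology, so that $c_a > c_b$ can always be arranged without altering the variety of the system. By contrast, the splitting steps and the merging of constants via the closure of $\mathcal{C}$ under $\vee$ and $\wedge$ are routine.
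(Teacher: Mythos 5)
Your proof is correct, but there is nothing in this paper to compare it against: the paper never proves Proposition \ref{prop_dist_nf} itself, it simply imports the statement from the author's earlier article on distributive lattices \cite{Dv_dist_lattices}. So your argument supplies content the paper delegates to a reference. What you do is the natural argument, and it matches in spirit the machinery the paper does exhibit for the Ershov case in the proof of Theorem \ref{theorem_system_nf}: split each equality into two inequalities, flatten one side into a join of meets (and the other into a meet of joins) and split the inequality along joinands and meetands, merge repeated variables by idempotence, merge the constant literals into a single constant using that $\mathcal{C}$ is a subalgebra, and discard as tautologies the inequalities in which some variable occurs on both sides. The one step that is genuinely specific to the lattice setting is your treatment of the two-constant case: in the Ershov setting the paper obtains the condition $c_a > c_b$ essentially for free by passing to the relative complement $c = c_a \setminus c_b$ (end of the proof of Theorem \ref{theorem_system_nf}), but in a plain distributive lattice no such operation exists, and your replacement of $c_b$ by $c_a \wedge c_b$ --- justified by the distributivity computation you give, together with discarding the case $c_a \leq c_b$ as a tautology --- is the correct substitute; this is indeed where distributivity genuinely enters. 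Two cosmetic caveats, both consistent with the statement: discarding tautologies may leave an empty (trivially satisfied) system, and a variable-free inequality $c_a \leq c_b$ with $c_a > c_b$ survives in the normal form as the representation of an inconsistent system.
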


\begin{remark}
Denote, that $X \leq Y$ we also call an equtions. Really, inequalities in lattices we can also assume as equations:
$$
X \leq Y \quad \sim \quad X \vee Y = Y.
$$
\end{remark}

Let's show another portion of equivalences.

\begin{proposition}\label{prop_equiv_systems}
In any Ershov algebras we have:
\begin{enumerate}
\item 
\begin{equation*}
a \setminus b \leq c \sim a \leq b \vee c,
\end{equation*}

\item 
\begin{equation*}
a \leq b \setminus c \sim
\left\{
\begin{array}{l}
a \leq b, \\
a \wedge c \leq 0.
\end{array}
\right.
\end{equation*}

\end{enumerate}
\end{proposition}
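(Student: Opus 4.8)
The plan is to reduce each equivalence to the defining facts about the relative complement recorded in the first Proposition of Section~1, namely that $x \setminus y \leq x$, that $(x \setminus y) \vee y = x \vee y$, and that $(x \setminus y) \wedge y = 0$, together with distributivity and the uniqueness of the relative complement noted after the definition of complement in an interval. Throughout I would read $a \leq b$ as $a \vee b = b$ (equivalently $a \wedge b = a$), and $a \wedge c \leq 0$ as $a \wedge c = 0$, since $0$ is the least element.

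For the first equivalence $a \setminus b \leq c \sim a \leq b \vee c$, I would argue both implications directly. For the forward direction, assume $a \setminus b \leq c$; joining with $b$ on both sides gives $(a \setminus b) \vee b \leq c \vee b$, and since $(a \setminus b) \vee b = a \vee b \geq a$ this yields $a \leq b \vee c$. For the backward direction, assume $a \leq b \vee c$. Then $a \setminus b \leq a \leq b \vee c$, so $a \setminus b = (a \setminus b) \wedge (b \vee c)$; distributing this meet and using $(a \setminus b) \wedge b = 0$ gives $a \setminus b = (a \setminus b) \wedge c$, that is $a \setminus b \leq c$.

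For the second equivalence the forward direction is again immediate: from $a \leq b \setminus c$ and $b \setminus c \leq b$ I obtain $a \leq b$, while meeting with $c$ and using $(b \setminus c) \wedge c = 0$ gives $a \wedge c \leq (b \setminus c) \wedge c = 0$. The backward direction is the step I expect to be the main obstacle, since there one cannot simply chain inequalities. Assuming $a \leq b$ and $a \wedge c = 0$, I would show that $a \vee (b \setminus c)$ is itself a complement of $c$ in the interval $[0, b \vee c]$: indeed $(a \vee (b \setminus c)) \wedge c = (a \wedge c) \vee ((b \setminus c) \wedge c) = 0$, and $(a \vee (b \setminus c)) \vee c = a \vee (b \vee c) = b \vee c$ using $(b \setminus c) \vee c = b \vee c$ and $a \leq b$. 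Since in a distributive lattice the complement in an interval is unique, this forces $a \vee (b \setminus c) = b \setminus c$, which is exactly $a \leq b \setminus c$. The only delicate point is invoking this uniqueness correctly; everything else is a routine application of distributivity and the basic identities for $\setminus$.
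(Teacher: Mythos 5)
Your proof is correct, but there is nothing in the paper to compare it against: the paper states Proposition \ref{prop_equiv_systems} with no proof at all, so your argument fills a genuine omission rather than paralleling or diverging from an existing proof. Everything you invoke is available in the paper: the facts $x \setminus y \leq x$, $(x \setminus y) \vee y = x \vee y$ and $(x \setminus y) \wedge y = 0$ are items of the paper's first proposition, and the uniqueness of the complement in an interval of a distributive lattice is recorded just after the definition of complement in an interval, with a citation to Birkhoff. Your treatment of the only non-routine step --- deducing $a \leq b \setminus c$ from $a \leq b$ and $a \wedge c = 0$ --- is sound: you check that $(a \vee (b \setminus c)) \wedge c = 0$ and $(a \vee (b \setminus c)) \vee c = b \vee c$, so $a \vee (b \setminus c)$ is a complement of $c$ in the interval $[0, b \vee c]$, and since $b \setminus c$ is by definition such a complement, uniqueness gives $a \vee (b \setminus c) = b \setminus c$, i.e. $a \leq b \setminus c$. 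It is worth noting that this ``exhibit a second complement and invoke uniqueness'' device is exactly the technique the paper uses to prove Propositions \ref{main_prop} and \ref{prop_n_addons}, so your argument is stylistically the proof the author most plausibly intended; the forward directions and the first equivalence are, as you say, routine chains of inequalities using distributivity.
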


We have all to show another canonical form of the systems of equations over Ershov algebras.

\begin{theorem}\label{theorem_system_nf}
Any system of equations $S(\ox)$ of variables $\ox$ in Ershov $\mathcal{C}$-algebra can be rewritten to the system consisted from the following equations:
$$
x_{i_1} \wedge \ldots \wedge x_{i_m} \wedge c \leq x_{j_1} \vee \ldots \vee x_{j_l}
$$
or (if there is no variables in the right side)
$$
x_{i_1} \wedge \ldots \wedge x_{i_m} \wedge c = 0
$$
or
$$
x_{i_1} \wedge \ldots \wedge x_{i_m} \leq x_{j_1} \vee \ldots \vee x_{j_l}
\vee c
$$
or
$$
x_{i_1} \wedge \ldots \wedge x_{i_m} \leq x_{j_1} \vee \ldots \vee x_{j_l},
$$
where all variable symbols are different in both side of the equation and $c > 0$.
\end{theorem}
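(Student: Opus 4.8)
The plan is to reduce an arbitrary system to inequalities of the stated shape in two stages: first eliminate the symbol $\setminus$, reducing to pure distributive-lattice inequalities, and then collapse the two possible constants into a single one using the Ershov structure.

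First I would replace each equation $s = t$ by the pair $s \le t$, $t \le s$, so that it suffices to normalize a single inequality $s \le t$. By Theorem \ref{theorem_nf} and Remark \ref{remark_to_lemma_term_nf} I may assume $s$ is written in DNF and $t$ in CNF, with every atom of the form $a \setminus b$ where $a,b$ are variables or constants. Writing $s = \bigvee_i P_i$ with each $P_i$ a meet of atoms, the inequality $s \le t$ is equivalent to the conjunction of the inequalities $P_i \le t$; writing $t = \bigwedge_k Q_k$ with each $Q_k$ a join of atoms, each $P_i \le t$ is equivalent to the conjunction of $P_i \le Q_k$. Thus everything reduces to inequalities $P \le Q$ with $P$ a meet and $Q$ a join of atoms $a \setminus b$. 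Now Proposition \ref{prop_n_addons} collapses these into single differences: $P = A \setminus B$ and $Q = C \setminus D$, where $A,D$ are meets and $B,C$ are joins of variables and constants. Hence every inequality takes the form $A \setminus B \le C \setminus D$.

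Next I would remove $\setminus$. By Proposition \ref{prop_equiv_systems}(2) the inequality $A \setminus B \le C \setminus D$ is equivalent to the system $\{\, A \setminus B \le C,\ (A \setminus B) \wedge D \le 0 \,\}$. The first part, by Proposition \ref{prop_equiv_systems}(1), becomes the $\setminus$-free inequality $A \le B \vee C$. For the second I would first establish the identity $(A \setminus B) \wedge D = (A \wedge D) \setminus B$, which follows from Proposition \ref{main_prop} together with the basic identities $(x \setminus y)\vee(x \wedge y)=x$ and $(x \setminus y)\wedge y = 0$ of the first Proposition; then Proposition \ref{prop_equiv_systems}(1) with right-hand side $0$ turns $(A \wedge D)\setminus B \le 0$ into $A \wedge D \le B$. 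Both resulting inequalities are now of the shape (meet of variables and constants) $\le$ (join of variables and constants), with no $\setminus$ remaining.

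Finally, to each such lattice inequality I would apply Proposition \ref{prop_dist_nf}, which is legitimate since an Ershov algebra is a distributive lattice, obtaining the form $x_{i_1}\wedge\cdots\wedge x_{i_m}\wedge c_a \le x_{j_1}\vee\cdots\vee x_{j_l}\vee c_b$ with distinct variables on each side and $c_a > c_b$ whenever both constants occur. The remaining task, which I expect to be the main obstacle since it is the only step not already supplied by a cited result, is to reduce this genuinely two-constant case to the stated single-constant forms. Here I would exploit the Ershov operation by setting $c = c_a \setminus c_b > 0$, so that $c_a = c \vee c_b$ and $c \wedge c_b = 0$. Substituting and distributing gives $(X \wedge c)\vee(X\wedge c_b)\le Y\vee c_b$; the disjunct $X\wedge c_b \le Y\vee c_b$ is trivially valid and drops out, and meeting the surviving inequality $X\wedge c\le Y\vee c_b$ with $c$ (using $c\wedge c_b=0$) yields $X\wedge c\le Y$, which is form (1), or $X \wedge c = 0$ when no variables remain on the right, which is form (2). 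The one-constant and constant-free cases already match forms (1)--(4) directly, which completes the reduction.
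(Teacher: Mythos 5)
Your proposal is correct and follows essentially the same route as the paper: split equations into inequalities, normalize via Lemma \ref{lemma_term_nf} and Remark \ref{remark_to_lemma_term_nf}, merge atoms with Proposition \ref{prop_n_addons}, eliminate $\setminus$ with Proposition \ref{prop_equiv_systems}, apply Proposition \ref{prop_dist_nf}, and finally merge the two constants via $c = c_a \setminus c_b$. Your treatment of the right-hand sides (reducing to $A \setminus B \le C \setminus D$ and proving the commutation identity $(A\setminus B)\wedge D = (A\wedge D)\setminus B$) is merely a more explicit rendering of the step the paper summarizes as ``do these steps for right sides,'' and your distributivity argument for the final constant reduction reaches the same equivalence the paper obtains by its $\setminus$-manipulation.
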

\begin{proof}
Let $t(\overline{x}) = s(\ox)$ is the arbitrary equation from $S(\ox)$. Rewrite it as system of two equations:
\begin{equation*}
t(\ox) = s(\ox) \sim
\left\{
\begin{array}{l}
t(\ox) \leq s(\ox),\\
s(\ox) \leq t(\ox).
\end{array}
\right.
\end{equation*}

Do it for each equation in $S(\ox)$ and we will have the system of equations $t(\ox) \leq s(\ox)$, where $t(\ox)$ and $s(\ox)$ are terms of the language $\Lang$ and variables $\overline{x}$. Let $t(\ox) \leq s(\ox)$ is anbitratry equation from the system, rewrite it by the Lemma \ref{lemma_term_nf}:
$$
\left( A_{1,1} \wedge A_{1,2} \wedge \ldots \wedge A_{1,k_1} \right) \vee \ldots \vee \left( A_{m,1} \wedge \ldots \wedge A_{m, k_m} \right) \leq s(\ox),
$$
where each $A_{i,j}$ from Remark \ref{remark_to_lemma_term_nf} is $a \setminus b$, where $a$ and $b$ are either a variable or a constant symbol

Clearly: 
\begin{equation*}
\begin{array}{c}
\left( A_{1,1} \wedge A_{1,2} \wedge \ldots \wedge A_{1,k_1} \right) \vee \ldots \vee \left( A_{m,1} \wedge \ldots \wedge A_{m, k_m} \right) \leq s(\ox) \sim \\

\sim
\left\{
\begin{array}{l}
A_{1,1} \wedge A_{1,2} \wedge \ldots \wedge A_{1,k_1} \leq s(\ox), \\
\ldots \\
A_{m,1} \wedge A_{m,2} \wedge \ldots \wedge A_{m,k_m} \leq s(\ox). \\
\end{array}
\right.
\end{array}
\end{equation*}
Therefore, we can see at only following equations:
$$A_1 \wedge A_2 \wedge \ldots \wedge A_k \leq s(\ox),$$
and more precisely by Remark \ref{remark_to_lemma_term_nf}:
$$
a_1 \setminus b_1 \wedge a_2 \setminus b_1 \wedge \ldots \wedge a_k \setminus b_k \leq s(\ox).
$$
Rewrite it by Proposition \ref{prop_n_addons}:
$$
(a_1 \wedge a_2 \wedge \ldots \wedge a_k) \setminus (b_1 \vee b_2 \vee \ldots \vee b_k) \leq s(\ox).
$$ 
And by Propoposition \ref{prop_equiv_systems}:
$$
(a_1 \wedge a_2 \wedge \ldots \wedge a_k) \leq s(\ox) \vee b_1 \vee b_2 \vee \ldots \vee b_k,
$$
where we have no symbols $\setminus$ in the left side of the equation. And the number of symbols $\setminus$ in the right side is the same.
Do these steps for every equation in the system and have system without symbols $\setminus$ in the left sides.

Do these steps for rigth sides: make CNF, then apply other equivalences in Proposition \ref{prop_n_addons} and \ref{prop_equiv_systems}, and make equiations without symbols $\setminus$.

By Proposition \ref{prop_dist_nf} make all equations as:
$$
x_{i_1} \wedge \ldots \wedge x_{i_m} \wedge c_a \leq x_{j_1} \vee \ldots \vee x_{j_l} \vee c_b.
$$
If we have at most one constant symbol in an arbitrary equation, then we have required normal form, otherwise rewrite equation:
\begin{equation*}
\begin{array}{c}
x_{i_1} \wedge \ldots \wedge x_{i_m} \wedge c_a \leq  x_{j_1} \vee \ldots \vee x_{j_l} \vee c_b \sim \\

\sim \left( x_{i_1} \wedge \ldots \wedge x_{i_m} \wedge c_a \right) \setminus c_b \leq  x_{j_1} \vee \ldots \vee x_{j_l} \sim \\

\sim x_{i_1} \wedge \ldots \wedge x_{i_m} \wedge (c_a \setminus c_b) \leq  x_{j_1} \vee \ldots \vee x_{j_l}.
\end{array}
\end{equation*}
Denote $c = c_a \setminus c_b$, we make required normal form.

Denote that form
$$
x_{i_1} \wedge \ldots \wedge x_{i_m} \wedge c = 0
$$
can be maked if the right side does not have any variable symbol.

Finally, whole system has been rewritten in required form.
\end{proof}


\section{Weak equationally Noetherian property}

Prove for the first some required propositions.

\begin{proposition}
Let $\A$ is an Ershov $\mathcal{C}$-algebra. If $\A$ is weak equationally noetherian, then for all upper-unbound set of elements $\{ c_j | j \in J\} \subseteq \mathcal{C}$ and for some $c$ following equivalence is true:
$$
\left\{ x \wedge c_j = 0 | j \in J \right\} \, \sim \, x \leq c.
$$
\end{proposition}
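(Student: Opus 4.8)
The plan is to exploit the weak equationally Noetherian hypothesis to replace the infinite system $S = \{x \wedge c_j = 0 \mid j \in J\}$ by a finite equivalent one and then to read off the promised element $c$ from its canonical form. First I would note that $S$ is a system in the single variable $x$, so weak equational Noetherianity yields a finite system $S'$ in the same variable $x$ having the same solutions as $S$. Applying Theorem \ref{theorem_system_nf} I would put $S'$ into canonical form; since only one variable is available and there is no greatest element, each canonical equation collapses to a very short list. The key simplifying observation is that $x = 0$ is a solution of $S$ (because $0 \wedge c_j = 0$ for every $j$), so every equation of $S'$ must be satisfied by $x = 0$; this immediately discards any equation of ``lower bound'' type $c \leq x$ with $c > 0$ and any false constant equation. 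Hence, up to equivalence, $S'$ consists of finitely many constraints of exactly two shapes, $x \leq c^{(i)}$ and $x \wedge d^{(k)} = 0$, with all constants drawn from $\mathcal{C}$.

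Next I would collapse these finitely many constraints into at most two. By distributivity the conjunction of the $x \leq c^{(i)}$ is equivalent to $x \leq c_0$ with $c_0 = \bigwedge_i c^{(i)}$, and the conjunction of the $x \wedge d^{(k)} = 0$ is equivalent to $x \wedge d_0 = 0$ with $d_0 = \bigvee_k d^{(k)}$ (both $c_0, d_0 \in \mathcal{C}$). When at least one upper-bound constraint survives I would set $c := c_0 \setminus d_0$ and prove the set identity
$$\{x : x \leq c_0,\ x \wedge d_0 = 0\} = \{x : x \leq c_0 \setminus d_0\}.$$
The inclusion ``$\supseteq$'' is immediate from the elementary properties $c_0 \setminus d_0 \leq c_0$ and $(c_0 \setminus d_0) \wedge d_0 = 0$; for ``$\subseteq$'' I would write $x = x \wedge (c_0 \vee d_0) = x \wedge ((c_0 \setminus d_0) \vee d_0)$ and distribute, killing the $x \wedge d_0$ term. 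This shows that the solutions of $S$ are exactly $\{x : x \leq c\}$, i.e. $S \sim (x \leq c)$, as required.

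The main obstacle is the degenerate case in which no upper-bound constraint survives, for then the combined system is merely $x \wedge d_0 = 0$ (or, if even these are absent, empty), and $\{x : x \wedge d_0 = 0\}$ need not be a principal ideal in an Ershov algebra lacking a top element. This is exactly where the upper-unboundedness of $\{c_j\}$ is indispensable. I would argue by contradiction: testing the inclusion $\{x : x \wedge d_0 = 0\} \subseteq \{x : x \wedge c_j = 0 \text{ for all } j\}$ on the element $x = c_j \setminus d_0$, which satisfies $(c_j \setminus d_0) \wedge d_0 = 0$, forces $(c_j \setminus d_0) \wedge c_j = c_j \setminus d_0 = 0$, hence $c_j \leq d_0$ for every $j$. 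Thus $d_0$ would be an upper bound of $\{c_j \mid j \in J\}$, contradicting the hypothesis; the sub-case of an empty system similarly forces every $c_j = 0$, again bounded. Therefore the degenerate case cannot occur, an upper bound is always present, and the construction $c = c_0 \setminus d_0$ always applies.
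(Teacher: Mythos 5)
Your proposal is correct and follows essentially the same route as the paper's own proof: reduce to a finite canonical-form system via weak Noetherianity and Theorem \ref{theorem_system_nf}, use the solution $x=0$ to eliminate lower-bound equations, collapse the remaining constraints to $x \leq b$ and $x \wedge a = 0$, combine them as $x \leq b \setminus a$, and use the test element $c_j \setminus a$ to show that a system with no upper-bound constraint would force $c_j \leq a$, contradicting unboundedness. Your write-up is in fact slightly more complete, since you verify the equivalence $\{x \leq b,\ x \wedge a = 0\} \sim x \leq b \setminus a$ and handle the empty-system case explicitly, both of which the paper only asserts.
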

\begin{proof}
Denote, that if Ershov subalgebra $\mathcal{C}$ is upper-bounded, that there is no unbounded sets. Let $\{ c_j | j \in J\} \subseteq \mathcal{C}$ is upper unbounded set of constants. Consider following system of equations:
$$
S(x) = \left\{ x \wedge c_j = 0 \right\}.
$$
Because $\A$ is weak equationally Noetherian this system is equivalent to the some system $S'(x)$. Rewrite it by Theorem \ref{theorem_system_nf}. This one contains only one variable, then equations in $S'(x)$ can be one of:
\begin{enumerate}
\item $x \leq a$,
\item $x \wedge a = 0$,
\item $a \leq x$,
\end{enumerate}
where $a > 0$. Denote that $x = 0$ is a solution of $S(x)$, clearly $x = 0$ is a solution $S'(x)$ as well. Hence, there is no equations $a \leq x$ in $S'(x)$. According to this $S'(x)$ has following form:
\begin{equation*}
\left\{
\begin{array}{l}
x \leq b_1, \\
\ldots \\
x \leq b_m, \\
x \wedge a_1 = 0, \\
\ldots \\
x \wedge a_n = 0.
\end{array}
\right.
\end{equation*}

Let $b = b_1 \wedge \ldots b_m$ and $a = a_1 \vee \ldots \vee a_n$, then $S'(x)$ is equivalent to:
\begin{equation*}
\left\{
\begin{array}{l}
x \leq b, \\
x \wedge a = 0.
\end{array}
\right.
\end{equation*}

If this system consists of unique equation $x \leq b$, then required equivalence has been proved.

If this system consists of unique equation $x \wedge a = 0$, we have conflict with $\{ c_j | j \in J\}$ is unbound~--- because it it easy to show that for all $j \in J$ is $c_j \leq a$.

If this system consists of both equations:
\begin{equation*}
\left\{
\begin{array}{l}
x \leq b, \\
x \wedge a = 0
\end{array}
\right.
\sim x \leq b \setminus a.
\end{equation*}

Let $c = b \setminus a$. Weak equationally Noetherian property implies for any upper-unbounded set $\{ c_j | j \in J\} \subseteq \mathcal{C}$:
$$
\left\{ x \wedge c_j = 0 | j \in J \right\} \, \sim \, x \leq c.
$$
\end{proof}

\begin{proposition}
Let $\A$ is an Ershov $\mathcal{C}$-algebra. If $\A$ is weak equationally noetherian, the for any bounded in $\A$ subset of constants from $\mathcal{C}$ supremum in $\A$ exists and also is in $\mathcal{C}$.
\end{proposition}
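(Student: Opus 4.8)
The plan is to encode the upper bounds of the given family as the solution set of a one-variable system, and then exploit weak equational Noetherianity together with the single-variable normal form from Theorem~\ref{theorem_system_nf}, exactly as in the preceding proposition.

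First I would fix a subset $\{ c_j \mid j \in J \} \subseteq \mathcal{C}$ that is bounded above in $\A$, and consider the system
$$S(x) = \{ c_j \leq x \mid j \in J \}.$$
Its solution set $U$ is precisely the set of upper bounds of $\{ c_j \}$; since the family is bounded, $U \neq \emptyset$, and $U$ is clearly upward closed. By the weak equationally Noetherian property, $S(x) \sim S'(x)$ for some finite $S'(x)$, and since only one variable occurs, Theorem~\ref{theorem_system_nf} lets me assume that every equation of $S'(x)$ has one of the forms $x \leq b$, $x \wedge a = 0$, or $a \leq x$, with the occurring constant positive, just as in the previous proposition.

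The crucial step is to show that the upward closure of $U$ forces the first two types to be trivial or absent. If $x \leq b$ occurs, then picking any $u \in U$ and using upward closure, $u \vee y \in U$, hence $u \vee y \leq b$, for every $y \in \A$; as $y \leq u \vee y$, this yields $y \leq b$ for all $y$, so $b$ is the greatest element and the equation is trivial. If $x \wedge a = 0$ occurs with $a > 0$, then $u \vee a \in U$ by upward closure, yet $(u \vee a) \wedge a = a \neq 0$, contradicting that $u \vee a$ solves $S'(x)$. Hence, after discarding trivial equations, $S'(x)$ consists only of equations $a_k \leq x$; replacing them by their join $a = \bigvee_k a_k$ (a finite join, so well defined) gives $S'(x) \sim \{ a \leq x \}$, and $a \in \mathcal{C}$ because $\mathcal{C}$ is closed under $\vee$.

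Finally, equality of solution sets gives $\{ x \mid a \leq x \} = U$. Taking $x = a$ shows $a$ is itself an upper bound of $\{ c_j \}$, while $a \leq u$ for every $u \in U$ shows $a$ lies below every upper bound; thus $a = \sup_j c_j$, which exists and belongs to $\mathcal{C}$. The degenerate case where $S'(x)$ retains no nontrivial equation forces $U = \A$, whence every $c_j = 0$ and $\sup_j c_j = 0 \in \mathcal{C}$. I expect the main obstacle to be precisely the elimination argument for the $x \wedge a = 0$ and $x \leq b$ constraints; once those downward-closed constraints are ruled out, the remaining identification of $a$ with the supremum is routine bookkeeping with the normal form.
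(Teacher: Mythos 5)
Your proposal is correct and follows essentially the same route as the paper: encode the upper bounds of $\{c_j \mid j \in J\}$ as the solution set of $S(x) = \{c_j \leq x \mid j \in J\}$, pass to a finite equivalent system in the one-variable normal form of Theorem~\ref{theorem_system_nf}, rule out equations of type $x \leq b$ and $x \wedge a = 0$ using that the solution set is closed upward under joins, and identify the remaining equation $a \leq x$ with the statement that $a = \sup_j c_j \in \mathcal{C}$. If anything, your elimination of $x \leq b$ (forcing $b$ to be the greatest element) and your treatment of the degenerate empty-system case are slightly cleaner than the paper's corresponding steps, but the underlying argument is the same.
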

\begin{proof}
Let $\{ c_j | j \in J\} \subseteq \mathcal{C}$ an arbitrary set f constants which is upper-bounded in $\A$ with element $e$. Consider following system of equations:
$$
S(x) = \{ c_j \leq x | j \in J\}.
$$

Denote that $e$ is a solution of $S(x)$. By the weak equationally Noetherian property for any system of equations $S(x)$ exists some system $S'(x)$, for which:
\begin{equation*}
S'(x) = \left\{
\begin{array}{l}
x \leq a_1, \\
\ldots \\
x \leq a_n, \\
x \wedge b_1 = 0, \\
\ldots \\
x \wedge b_m = 0, \\
d_1 \leq x, \\
\ldots \\
d_k \leq x.
\end{array}
\right.
\end{equation*}

Let $a = a_1 \wedge \ldots \wedge a_n$, $b = b_1 \vee \ldots \vee b_m$, $d = d_1 \vee \ldots \vee d_k$. Rewrite $S'(x)$:
\begin{equation*}
S'(x) = \left\{
\begin{array}{l}
x \leq a, \\
x \wedge b = 0, \\
d \leq x.
\end{array}
\right.
\end{equation*}

Equation $x \leq a$ is not really in $S'(x)$. If $a$ is a greatest element in $\mathcal{C}$ and $\A$, then this equation are logically true and it can be missed. Otherwise, there is an element $a' \in \A \setminus \mathcal{C}$, that $a < a'$, and systems $S(x)$ and $S'(x)$ are not equivalent, because $e \vee a'$ is a solution of $S(x)$ but not $S'(x)$.

In addition, equation $x \wedge b = 0$ is also not in $S'(x)$. Otherwise, elemenet $e \vee b$ is a solution of $S(x)$ but not $S'(x)$.

Finally, $S'(x)$ consists only of one equation $d \leq x$, and it is simple to show that $d = \sup \{c_j | j \in J\}$.
\end{proof}

\begin{remark}
If there is a supremum in $\A$ for any set of constants from $\mathcal{C}$ and supremum is in $\mathcal{C}$, then there is an infinum for this set in $\A$, and infinum is also in $\mathcal{C}$.
\end{remark}
\begin{proof}
Let $\{ c_j | j \in J\} \subseteq \mathcal{C}$ an arbitrary set of constants. Here we will show that this set has an infinum and supremum in $\A$ and these ones are also in $\mathcal{C}$. By the condition there is the supremum $\sup \{ c_j | j \in J\} = d$ in $\A$ and $d \in \mathcal{C}$. Let's consider the set $\{ d \setminus c_j | j \in J\} \subseteq \mathcal{C}$. By the condition, this set laso have supremum $\sup \{ d \setminus c_j | j \in J\} = d' \in \mathcal{C}$. It implies that $d \setminus d'$ is infinum for $\{ c_j | j \in J\}$, and it is clearly in $\mathcal{C}$.
\end{proof}

Finally prove a criterion of weak equationally Noetherian property.

\begin{theorem}
Let $\A$ is an Ershov $\mathcal{C}$-algebra. $\A$ is weak equationally Noetherian by equations, if and only if:
\begin{enumerate}
\item for each set of constants from $\mathcal{C}$ bounded in $\A$ exists supremum in $\A$ and supremum is also in $\mathcal{C}$;
\item for each upper-unbounded set $\{ c_j | j \in J\} \subseteq \mathcal{C}$ and some $c \in \mathcal{C}$:
$$
\left\{ x \wedge c_j = 0 | j \in J \right\} \, \sim \, x \leq c.
$$
\end{enumerate}
\end{theorem}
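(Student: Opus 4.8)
The plan is to prove the two implications separately. The forward direction is essentially already in hand: the two propositions established immediately above show that the weak equationally Noetherian property forces, respectively, condition (2) (the first of them) and condition (1) (the second). So for the ``only if'' part I would simply invoke these two statements.

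For the converse I would start from an arbitrary system $S(\ox)$ in the fixed variables $\ox = (x_1, \dots, x_n)$ and apply Theorem \ref{theorem_system_nf} to rewrite it in canonical form, so that every equation has one of the four listed shapes. Since $n$ is fixed, there are only finitely many variable patterns $(P,Q)$ with $P = x_{i_1}\wedge\cdots\wedge x_{i_m}$ and $Q = x_{j_1}\vee\cdots\vee x_{j_l}$. I would group the equations of $S$ by their pattern and show that each group --- a priori an infinite family of equations differing only in the constant --- is equivalent to a single equation whose constant again lies in $\mathcal{C}$. The union of the finitely many resulting equations over all patterns is then a finite system equivalent to $S$, which is exactly the weak equationally Noetherian property.

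The reduction of one pattern-group splits according to whether the family of constants $\{c_\alpha\}$ occurring in it is bounded in $\A$ or not. The key lemma in the bounded case is an instance of the infinite distributive law: if $c^\ast = \sup_\alpha c_\alpha$ exists in $\mathcal{C}$ (condition (1)), then for every $p\in\A$ one has $p\wedge c^\ast = \sup_\alpha(p\wedge c_\alpha)$. I would prove this from the relative-complement structure alone: working in the interval $[0,c^\ast]$ and writing $w = c^\ast\setminus u$ for an upper bound $u\leq c^\ast$ of the elements $p\wedge c_\alpha$ (we may assume $u\leq c^\ast$ after replacing $u$ by $u\wedge c^\ast$), the equivalence $z\leq u \Leftrightarrow z\wedge w = 0$ reduces the claim to showing that $q\leq c^\ast$ with $q\wedge c_\alpha = 0$ for all $\alpha$ forces $q=0$; but then each $c_\alpha \leq c^\ast\setminus q$, hence $c^\ast = \sup_\alpha c_\alpha \leq c^\ast\setminus q$, so $q = q\wedge(c^\ast\setminus q) = 0$. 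With this lemma a bounded type-(a) group $\{P\wedge c_\alpha\leq Q\}$ collapses to $P\wedge c^\ast\leq Q$ and a bounded type-(b) group $\{P\wedge c_\alpha = 0\}$ to $P\wedge c^\ast = 0$. For type (c), using $P\leq Q\vee c_\alpha \Leftrightarrow P\setminus Q\leq c_\alpha$ (Proposition \ref{prop_equiv_systems}) the group is equivalent to $P\leq Q\vee c_\ast$ with $c_\ast = \inf_\alpha c_\alpha$; this infimum exists in $\mathcal{C}$ by meeting with a fixed $c_{\alpha_0}$ and applying condition (1) together with the Remark to the now bounded family $\{c_{\alpha_0}\wedge c_\alpha\}$.

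The unbounded case is where condition (2) enters, and it is the step I expect to be the main obstacle, because condition (2) concerns a fixed family of constants whereas the constraint produced by an assignment depends on the values of the variables. I would bridge this gap through the identity $P\wedge c_\alpha\leq Q \Leftrightarrow (P\setminus Q)\wedge c_\alpha = 0$, which follows from $P = (P\setminus Q)\vee(P\wedge Q)$ together with $(P\setminus Q)\wedge Q = 0$: for each assignment the element $P\setminus Q$ is a definite member of $\A$, so applying condition (2) to the unbounded family $\{c_\alpha\}$ gives that $(P\setminus Q)\wedge c_\alpha = 0$ for all $\alpha$ iff $P\setminus Q\leq c$, i.e.\ iff $P\leq Q\vee c$ with $c\in\mathcal{C}$ (again by Proposition \ref{prop_equiv_systems}). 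Thus an unbounded type-(a) group collapses to a single type-(c) equation, and likewise an unbounded type-(b) group to $P\leq c$. After these collapses each pattern contributes only finitely many equations --- re-consolidating the type-(c) equations of a pattern once more if necessary, and absorbing any degenerate constant $c=0$ into a type-(d) equation --- and the union over the finitely many patterns is the required finite equivalent system.
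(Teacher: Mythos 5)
Your proposal is correct, and its skeleton coincides with the paper's own proof: the forward implication is delegated to the two propositions preceding the theorem, and the converse rewrites $S(\ox)$ via Theorem \ref{theorem_system_nf}, groups the resulting equations by their variable pattern $(P,Q)$, and collapses each (possibly infinite) group to one equation --- using the supremum from condition (1) when the constants involved are bounded, and condition (2) together with the translation $P \wedge c_\alpha \leq Q \,\sim\, (P\setminus Q)\wedge c_\alpha = 0$ when they are not. The genuine difference is that you prove two steps the paper merely asserts, and both really need proof. First, collapsing a bounded group $\{P\wedge c_\alpha \leq Q\}$ to $P \wedge c^\ast \leq Q$ rests on the infinite distributive law $p \wedge \sup_\alpha c_\alpha = \sup_\alpha (p\wedge c_\alpha)$, which fails in general distributive lattices and must be extracted from the relative-complement structure; your argument (an element $q\leq c^\ast$ with $q\wedge c_\alpha=0$ for all $\alpha$ gives $c_\alpha \leq c^\ast\setminus q$ for all $\alpha$, hence $c^\ast\leq c^\ast\setminus q$ and $q=0$) is exactly the missing lemma, where the paper only says ``we can rewrite whole group of equations as one.'' Second, for the type-(c) groups the paper invokes its Remark to produce $\inf_\alpha c_\alpha \in \mathcal{C}$, but that Remark is stated under the hypothesis that \emph{arbitrary} sets of constants have suprema in $\mathcal{C}$, which condition (1) supplies only for bounded sets; your device of passing to the bounded family $\{c_{\alpha_0}\wedge c_\alpha\}$, which has the same lower bounds and hence the same infimum, closes this gap. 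So the two arguments take the same route, but yours is complete at the two points where the paper's is elliptical; nothing in your proposal fails.
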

\begin{proof}
One part of this theorem follows from propositions above. Let's prove another part.
Consider an arbitrary system $S(\ox)$ of variables $\ox$. Rewrite this system by Theorem \ref{theorem_system_nf} and make equivalent system $S'(x)$ consisted from equiations in following groups:
\begin{enumerate}
\item $x_{i_1} \wedge \ldots \wedge x_{i_m} \wedge c \leq x_{j_1} \vee \ldots \vee x_{j_l},$
\item $x_{i_1} \wedge \ldots \wedge x_{i_m} \wedge c = 0,$
\item $x_{i_1} \wedge \ldots \wedge x_{i_m} \leq x_{j_1} \vee \ldots \vee x_{j_l} \vee c, $
\item $x_{i_1} \wedge \ldots \wedge x_{i_m} \leq x_{j_1} \vee \ldots \vee x_{j_l}.$
\end{enumerate}
Fix two nonintersecting set of variables $X = \{ x_{i_1}, \ldots, x_{i_m} \}$ and $Y = \{x_{j_1}, \ldots, x_{j_l}\}$ and consider equations from the first group in $S'(x)$, which have variables $X$ on the left side and $Y$ on the right:
$$
\left\{ \bigwedge X \wedge c_j \leq \bigvee Y \, | \, j \in J \right\}.
$$
If set of constants $\{ c_j | j \in J\}$ is upper-bounded in $\A$, then by previous proposition for there is upper-bound constant $c$, and we can rewrite whole group of equations as one:
$$
\left\{ \bigwedge X \wedge c_j \leq \bigvee Y \, | \, j \in J \right\}
\sim \bigwedge X \wedge c \leq \bigvee Y.
$$
Otherwise, rewrite each equation:
\begin{equation*}
\begin{array}{c}
\bigwedge X \wedge c_j \leq \bigvee Y \sim \\
\sim \left( \bigwedge X \wedge c_j \right) \setminus \bigvee Y \leq 0 \sim \\
\sim \left( \bigwedge X \setminus \bigvee Y \right) \wedge c_j \leq 0 \sim \\
\sim \left( \bigwedge X \setminus \bigvee Y \right) \wedge c_j = 0.
\end{array}
\end{equation*}

By the condition of this theorem for some $c \in \mathcal{C}$:
$$
\left\{
\left( \bigwedge X \setminus \bigvee Y \right) \wedge c_j = 0
\right\} \sim
\left( \bigwedge X \setminus \bigvee Y \right) \leq c.
$$

Therefore, we replace one group of equations by one equation. Analogically we can replace group $x_{i_1} \wedge \ldots \wedge x_{i_m} \wedge c = 0$ by one equation.

Consider second group for each two nonintersecting set of variables $X$ and $Y$:
$$
\left\{ \bigwedge X \leq \bigvee Y \vee c_j \, | \, j \in J \right\}.
$$
By remark above $\{ c_j | j \in J\}$ have infinum $c$ in $\A$ and $c \in \mathcal{C}$. Clearly:
$$
\left\{ \bigwedge X \leq \bigvee Y \vee c_j \, | \, j \in J \right\} \sim
 \bigwedge X \leq \bigvee Y \vee c.
$$

Denote that for arbitrary  $X$ and $Y$ there is can be only
$$
x_{i_1} \wedge \ldots \wedge x_{i_m} \leq x_{j_1} \vee \ldots \vee x_{j_l}.
$$

Replace all groups by required equation and we take finite equivalent system of equations. Finally, Ershov $\mathcal{C}$-algebra $\A$ is weak equationally Noetherian.
\end{proof}

\bigskip

\end{document}